\DeclareMathOperator{\Aut}{Aut}
\DeclareMathOperator{\Ker}{Ker}
\DeclareMathOperator{\Deg}{deg}
\DeclareMathOperator{\diag}{diag}
\DeclareMathOperator{\Ind}{Ind}
\newtheorem{thm}{Theorem}[section]
\newtheorem{lem}[thm]{Lemma}
\newtheorem{prop}[thm]{Proposition}
\newtheorem{cor}[thm]{Corollary}
\newtheorem{conj}[thm]{Conjecture}
\newtheorem{Def}[thm]{Definition}
\newtheorem{remark}[thm]{Remark}
\begin{document}
\renewcommand{\thefootnote}{\fnsymbol{footnote}}
\footnotetext{\emph{2010 Mathematics Subject Classification:} 14R10, 14R20}
\footnotetext{\emph{Key words:} Torus action, Linearization, Free algebra.}
\renewcommand{\thefootnote}{\arabic{footnote}}
\fontsize{12}{12pt}\selectfont
\title{\bf Noncommutative Bia\l{}ynicki-Birula Theorem}
\renewcommand\Affilfont{\itshape\small}
\author[1,4]{Andrey Elishev\thanks{elishev@phystech.edu}}
\author[5]{Alexei Kanel-Belov\thanks{kanel@mccme.ru}}
\author[1,3]{Farrokh Razavinia\thanks{f.razavinia@phystech.edu}}
\author[5]{Jie-Tai Yu\thanks{jietaiyu@szu.edu.cn}}
\author[2]{Wenchao Zhang\thanks{whzecomjm@gmail.com}}

\affil[1]{Laboratory of Advanced Combinatorics and Network Applications, Moscow Institute of Physics and Technology, Dolgoprudny, Moscow Region, 141700, Russia}
\affil[2]{Mathematics Department, Bar-Ilan University, Ramat-Gan, 52900, Israel}
\affil[3]{Department of Mathematics, University of Porto, Pra\c{c}a de Gomes Teixeira, 4099-002 Porto, Portugal}
\affil[4]{Department of Discrete Mathematics, Moscow Institute of Physics and Technology, Dolgoprudny, Moscow Region, 141700, Russia}
\affil[5]{College of Mathematics and Statistics, Shenzhen University, Shenzhen, 518061, China}

\date{}

\maketitle

\renewcommand{\abstractname}{Abstract}
\begin{abstract}
In this short note we prove that every maximal torus action on the free algebra is conjugate to a linear action. This statement is the free algebra analogue of a classical theorem of A. Bia\l{}ynicki-Birula. The paper was supported by russian science foundation grant N 17-11-01377

\end{abstract}

\section{Actions of algebraic tori}

In this note we consider algebraic torus actions on the affine space, according to Bia\l{}ynicki-Birula, and formulate certain noncommutative generalizations.

We begin by recalling a few basic definitions.
Let $\mathbb{K}$ be an algebraically closed field.

\begin{Def}
	An algebraic group is a variety $G$ equipped with the structure of a group, such that the multiplication map $m: G \times G \to G:(g_1,g_2)\mapsto g_1g_2$ and the inverse map $\iota:G \to G:g \mapsto g^{-1}$ are morphisms of varieties.
\end{Def}
\begin{Def}
	A $G$-variety is a variety equipped with an action of the algebraic group $G$,
	$$\alpha:G \times X \to X:(g,x)\mapsto g \cdot x,$$
	which is also a morphism of varieties. We then say that $\alpha$ is an algebraic $G$-action.
\end{Def}

Let $\mathbb{K}$ be our ground field, which is assumed to be algebraically closed. Let $Z=\{z_1, z_2, \ldots \}=\{z_i: i\in I\}$ be a finite or a countable set of variables (where $I=\{1,2, \ldots\}$ is an index set),  and let $Z^*$ denote the free semigroup generated by $Z$, $Z^{+}=Z^*\backslash \{1\}$. Moreover let $F_I(\mathbb{K})=\mathbb{K}\left\langle  Z \right\rangle $ be the free associative $\mathbb{K}$-algebra and $\hat{F}_I(\mathbb{K})=\mathbb{K}\left\langle \left\langle Z \right\rangle \right\rangle $ be the algebra of formal power series in free variables.

Denote by $\mathcal{W}=\left\langle Z \right\rangle $ the free monoid of words over the alphabet $Z$ (with 1 as the empty word) such that $\left| \mathcal{W} \right| \ge 1 $, for $\left| \mathcal{W} \right|$ the length of the word $\mathcal{W} \in Z^+$.

For an alphabet $Z$, the free associative $\mathbb{K}$-algebra on $Z$ is
$$\mathbb{K}\left\langle Z \right\rangle :=\oplus_{\mathcal{W}\in Z^*} \mathbb{K}\mathcal{W},$$ where the multiplication is $\mathbb{K}$-bilinear extension of the concatenation on words,  $Z^*$  denotes the free monoid on $Z$, and $\mathbb{K}\mathcal{W}$ denotes the free $\mathbb{K}$-module on one element, the word $\mathcal{W}$.  Any element of $\mathbb{K}\left\langle Z \right\rangle $ can thus be written uniquely in the form
$$\sum\limits_{k=0}^{\infty} \sum\limits_{i_1,\ldots,i_k \in I}^{}a_{i_1,i_2,\ldots,i_k}z_{i_1}z_{i_2}\ldots z_{i_k},$$
where the coefficients $a_{i_1,i_2,\ldots,i_k}$ are elements of the field $\mathbb{K}$ and all but finitely many of these elements are zero.

In our context, the alphabet $Z$ is the same as the set of algebra generators, therefore the terms "monomial" and "word" will be used interchangeably.

In the sequel, we employ a (slightly ambiguous) short-hand notation for a free algebra monomial. For an element $z$, its powers are defined as usual. Any monomial $z_{i_1}z_{i_2}\ldots z_{i_k}$ can then be written in a reduced form with subwords $zz\ldots z$ replaced by powers.

We then write
$$
z^I = z_{j_1}^{i_1}z_{j_2}^{i_2}\ldots z_{j_k}^{i_k}
$$
where by $I$ we mean an assignment of $i_k$ to $j_k$ in the word $z^I$. Sometimes we refer to $I$ as a multi-index, although the term is not entirely accurate. If $I$ is such a multi-index, its abosulte value $|I|$ is defined as the sum $i_1+\cdots+ i_k$.

\smallskip

For a field $\mathbb{K}$, let $\mathbb{K}^{\times}=\mathbb{K}\backslash \{0\}$ denote the multiplicative group of its non-zero elements viewed as an algebraic $\mathbb{K}$-group.

\begin{Def} \label{defgroup}
An $n$-dimensional algebraic $\mathbb{K}$-torus is a group
$$
\mathbb{T}_n\simeq (\mathbb{K}^{\times})^n
$$
(with obvious multiplication).
\end{Def}
Denote by $\mathbb{A}^n$ the affine space of dimension $n$ over $\mathbb{K}$.
\begin{Def} \label{defaction}
A (left) torus action is a morphism
$$
\sigma: \mathbb{T}_n\times \mathbb{A}^n\rightarrow \mathbb{A}^n.
$$
that fulfills the usual axioms (identity and compatibility):
$$
\sigma(1,x)=x,\;\;\sigma(t_1,\sigma(t_2,x))=\sigma(t_1t_2,x).
$$

The action $\sigma$ is \textbf{effective} if for every $t\neq 1$ there is an element $x\in \mathbb{A}^n$ such that $\sigma(t,x)\neq x$.
\end{Def}

In \cite{BB1}, Bia\l{}ynicki-Birula proved the following two theorems.

\begin{thm} \label{BBthm1}
Any regular action of $\mathbb{T}_n$ on $\mathbb{A}^n$ has a fixed point.
\end{thm}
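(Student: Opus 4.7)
The plan is to translate the geometric statement into commutative algebra via the equivalence between affine varieties and their coordinate rings. A regular action of $\mathbb{T}_n$ on $\mathbb{A}^n$ corresponds to a coaction of $\mathbb{K}[\mathbb{T}_n]$ on $R := \mathbb{K}[x_1,\ldots,x_n]$, and since the characters of $\mathbb{T}_n$ form the lattice $\mathbb{Z}^n$, this coaction is equivalent to a $\mathbb{Z}^n$-grading
\[
R \;=\; \bigoplus_{\chi \in \mathbb{Z}^n} R_\chi, \qquad R_\chi \;=\; \{\, f \in R : t \cdot f = \chi(t)\, f \text{ for every } t \in \mathbb{T}_n \,\}.
\]
A closed point of $\mathbb{A}^n$ is $\mathbb{T}_n$-fixed precisely when its maximal ideal is homogeneous with respect to this grading, so the theorem reduces to producing a homogeneous maximal ideal of $R$.

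The next step is to introduce the irrelevant ideal $\mathfrak{m}_+ := \bigoplus_{\chi \neq 0} R_\chi$, which is homogeneous by construction and proper since $1 \in R_0$. The natural projection identifies $R/\mathfrak{m}_+$ with the ring of invariants $R_0 = R^{\mathbb{T}_n}$, and under this identification, ideals of $R_0$ pull back to ideals of $R$ containing $\mathfrak{m}_+$; each such pull-back is automatically homogeneous. Thus, for any $\mathbb{K}$-rational maximal ideal $\mathfrak{n} \subset R_0$, the ideal $\mathfrak{m} := \mathfrak{n} + \mathfrak{m}_+$ is a homogeneous maximal ideal of $R$, because $R/\mathfrak{m} \cong R_0/\mathfrak{n} \cong \mathbb{K}$; the closed point corresponding to $\mathfrak{m}$ is then the desired fixed point of $\sigma$.

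The only substantive ingredient is the existence of a $\mathbb{K}$-rational maximal ideal of $R_0$. By Hilbert's Nullstellensatz this is automatic once $R_0$ is known to be a finitely generated $\mathbb{K}$-algebra, which is itself the content of the classical finite generation theorem for invariant rings of reductive group actions: $\mathbb{T}_n$ is linearly reductive and $R$ is a finitely generated $\mathbb{K}$-algebra, so $R_0$ is finitely generated over $\mathbb{K}$. I expect this invocation of finite generation of invariants to be the main external input of the proof; once it is granted, the entire argument amounts to a direct manipulation of the weight decomposition. An alternative would be to compactify $\mathbb{A}^n$ inside $\mathbb{P}^n$, extend the action, invoke Borel's fixed point theorem, and then locate the fixed point inside the affine chart, but the extension step is more delicate than the grading approach sketched above.
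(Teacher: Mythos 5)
The paper offers no proof of this theorem at all---it is quoted verbatim from Bia\l{}ynicki-Birula's article \cite{BB1}---so your argument has to stand on its own, and it does not. The fatal step is the assertion that $\mathfrak{m}_+=\bigoplus_{\chi\neq 0}R_\chi$ is an ideal with $R/\mathfrak{m}_+\cong R_0$. That is true for an $\mathbb{N}$-grading, but for a $\mathbb{Z}^n$-grading the product of an element of weight $\chi$ with one of weight $-\chi$ lies in $R_0$, so $\mathfrak{m}_+$ is an ideal only if no nonzero character $\chi$ has both $R_\chi\neq 0$ and $R_{-\chi}\neq 0$ --- i.e.\ only if the weight cone is already pointed, which is essentially what one learns \emph{after} linearizing. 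Equivalently, the weight-zero projection $R\to R_0$ is $R_0$-linear but not a ring homomorphism, so maximal ideals of $R_0$ need not pull back to ideals of $R$. Concretely, for the hyperbolic action $t\cdot(x,y)=(tx,t^{-1}y)$ of $\mathbb{K}^\times$ on $\mathbb{A}^2$ one has $R_0=\mathbb{K}[xy]$ and $x,y\in\mathfrak{m}_+$; applying your recipe to $\mathfrak{n}=(xy-1)\subset R_0$ gives an ideal containing both $xy$ and $xy-1$, hence $1$, and indeed the fiber $\{xy=1\}$ is a free orbit with no fixed point. Since nothing in your argument uses $\dim\mathbb{T}=\dim\mathbb{A}$, it would wrongly manufacture a fixed point in \emph{every} fiber of $\mathbb{A}^n\to\Spec R_0$. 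The true statement is that the fixed locus equals $V(\langle R_+\rangle)$, where $\langle R_+\rangle$ is the ideal \emph{generated} by $\bigoplus_{\chi\neq 0}R_\chi$; the theorem is exactly the claim $\langle R_+\rangle\neq R$, and your write-up assumes this rather than proving it. (That it can fail on other affine varieties --- e.g.\ $\mathbb{K}^\times$ acting on $\Spec\mathbb{K}[x,x^{-1}]$ --- shows the polynomiality of $R$ must enter somewhere.)

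The argument is repairable in the maximal-torus case, but the repair is the real content. For an \emph{effective} action of $\mathbb{T}_n$ on $\mathbb{A}^n$ the generic stabilizer is trivial, so there is a dense orbit and hence $R_0=\mathbb{K}$. Then if $f\in R_\chi$ and $g\in R_{-\chi}$ are nonzero with $\chi\neq 0$, the product $fg$ is a nonzero constant, forcing $f$ to be a unit of $\mathbb{K}[x_1,\dots,x_n]$, hence a constant of weight $0$ --- a contradiction. Only at this point does $\mathfrak{m}_+$ become a genuine (maximal, homogeneous, codimension-one) ideal, and its zero locus is the desired fixed point; the appeals to finite generation of invariants and the Nullstellensatz then evaporate. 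You should also note that the theorem as stated drops the effectiveness hypothesis, so one must additionally pass to the quotient torus acting effectively, which for a kernel of positive dimension leads to the genuinely harder sub-maximal case.
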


\begin{thm} \label{BBthm2}
Any effective and regular action of $\mathbb{T}_n$ on $\mathbb{A}^n$ is a representation in some coordinate system.
\end{thm}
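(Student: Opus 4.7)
The plan is to produce a polynomial change of variables in which $\sigma$ coincides with the linear representation induced on the cotangent space at a fixed point. The two ingredients are (i) existence of a fixed point, supplied by Theorem \ref{BBthm1}, and (ii) linear reductivity of $\mathbb{T}_n$, which splits every rational $\mathbb{T}_n$-module and every equivariant surjection between such modules.

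First, I would apply Theorem \ref{BBthm1} to choose $p \in \mathbb{A}^n$ with $\sigma(t,p) = p$ for all $t$, and translate so that $p = 0$. The maximal ideal $\mathfrak{m} = (x_1, \ldots, x_n)$ is then $\mathbb{T}_n$-stable, as is each power $\mathfrak{m}^k$. By reductivity, $\mathfrak{m}/\mathfrak{m}^2$ decomposes as $\bigoplus_{i=1}^{n} \mathbb{K}\bar y_i$ into weight lines with characters $\chi_1, \ldots, \chi_n$, and the equivariant surjection $\mathfrak{m} \twoheadrightarrow \mathfrak{m}/\mathfrak{m}^2$ splits equivariantly, so I can lift each $\bar y_i$ to a weight vector $y_i \in \mathfrak{m}$ with the same character. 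The assignment $x_i \mapsto y_i$ then defines a $\mathbb{T}_n$-equivariant endomorphism of $\mathbb{K}[x_1, \ldots, x_n]$, with the source carrying $\sigma$ and the target the diagonal linear action with weights $\chi_i$; it remains to check that this map is surjective.

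I would next use effectiveness to conclude that $\chi_1, \ldots, \chi_n$ are $\mathbb{Z}$-linearly independent: any $t$ in their common kernel acts trivially on every symmetric power $\operatorname{Sym}^{k}(\mathfrak{m}/\mathfrak{m}^2) \cong \mathfrak{m}^k/\mathfrak{m}^{k+1}$, hence trivially on the $\mathfrak{m}$-adic completion, and therefore trivially on $\mathbb{A}^n$. Decomposing into weight spaces $\mathbb{K}[x_1, \ldots, x_n] = \bigoplus_{\chi} R_\chi$, every non-zero $f \in R_\chi$ has, by Krull's intersection theorem, a non-zero leading symbol in some $\mathfrak{m}^k/\mathfrak{m}^{k+1}$ that is a monomial combination of the $\bar y_i$; this forces $R_\chi = 0$ unless $\chi = \sum a_j \chi_j$ with $a_j \in \mathbb{Z}_{\geq 0}$, and bounds $\dim R_\chi$ by the (finite) number of such representations. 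A straightforward induction using $\mathfrak{m}^k/\mathfrak{m}^{k+1} = \operatorname{span}\{\bar y^{I} : |I| = k\}$ yields $\mathbb{K}[y_1, \ldots, y_n] + \mathfrak{m}^k = \mathbb{K}[x_1, \ldots, x_n]$ for every $k$; projecting onto each finite-dimensional $R_\chi$ and choosing $k$ large enough so that $\mathfrak{m}^k \cap R_\chi = 0$ then upgrades this $\mathfrak{m}$-adic density to the equality $\mathbb{K}[y_1, \ldots, y_n] = \mathbb{K}[x_1, \ldots, x_n]$. In the coordinates $y_i$, $\sigma$ is the diagonal representation with characters $\chi_i$.

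The main obstacle is the last step, promoting $\mathfrak{m}$-adic density to polynomial equality. Lifting a weight basis of $\mathfrak{m}/\mathfrak{m}^2$ only produces a morphism that is étale at the fixed point, and a torus-equivariant étale map $\mathbb{A}^n \to \mathbb{A}^n$ is not automatically an isomorphism. The decisive input is the finite-dimensionality of each $R_\chi$, which in turn relies on the linear independence of the $\chi_i$; both the equality $\dim \mathbb{T}_n = \dim \mathbb{A}^n$ and effectiveness of $\sigma$ are used in an essential way at this point.
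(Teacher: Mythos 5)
Your argument is correct, but it is not the route taken here. This paper does not actually reprove Theorem \ref{BBthm2}; it cites it from Bia\l{}ynicki-Birula and instead proves the free-algebra analogue (Theorem \ref{BBfree}) by following his original scheme, which is what your proposal should be measured against. That scheme shares your first two steps (move the fixed point to the origin via Theorem \ref{BBthm1}, diagonalize the linear part into weight monomials $t_1^{m_{i1}}\cdots t_n^{m_{in}}$, and use effectiveness to show the exponent matrix $[m_{ij}]$ is non-singular --- the exact counterpart of your $\mathbb{Z}$-linear independence of $\chi_1,\dots,\chi_n$), but it then diverges: rather than lifting a weight basis of $\mathfrak{m}/\mathfrak{m}^2$ and proving surjectivity afterwards, it forms $\varphi(t)=\tau(t^{-1})\circ\sigma(t)$, extracts the weight-zero component $\beta$ of this family (a Reynolds-operator-style averaging), observes that $\beta$ automatically intertwines $\sigma$ with the linear action $\tau$, and proves $\beta$ is invertible by bounding the degree of $\hat{\beta}^{-1}$ in the power-series completion using the non-singularity of $[m_{ij}]$. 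Your replacement for that last step --- the equivariant splitting of $\mathfrak{m}\twoheadrightarrow\mathfrak{m}/\mathfrak{m}^2$ plus the observation that linear independence of the $\chi_i$ forces each weight space $R_\chi$ to be (at most one-) finite-dimensional, so that $\mathfrak{m}$-adic density of $\mathbb{K}[y_1,\dots,y_n]$ upgrades to equality --- is sound; you correctly identify that the \'etale-at-the-fixed-point property alone is insufficient and that the weight count is the decisive input. What the paper's version buys is that it transfers verbatim to the noncommutative setting, since it never invokes the identification $\mathfrak{m}^k/\mathfrak{m}^{k+1}\cong\operatorname{Sym}^k(\mathfrak{m}/\mathfrak{m}^2)$ on which your multiplicity bound rests; what yours buys is a cleaner conceptual statement (each $R_\chi$ is one-dimensional, spanned by $y^I$, so $\mathbb{K}[x]=\mathbb{K}[y]$ immediately) in the commutative case. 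One small point worth spelling out in your write-up: the inference from ``$t$ acts trivially on every $\mathfrak{m}^k/\mathfrak{m}^{k+1}$'' to ``$t$ acts trivially'' uses that the action of $t$ on $\mathbb{K}[x_1,\dots,x_n]$ is diagonalizable (so unipotent on the filtration quotients implies trivial); this is automatic for a rational torus action but should be said.
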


The term "regular" is to be understood here as in the algebro-geometric context of regular function (Bia\l{}ynicki-Birula also considered birational actions).
The last theorem says that any effective regular maximal torus action on the affine space is conjugate to a linear action, or, as it is sometimes called, \textbf{linearizable}.

\medskip

An algebraic group action on $\mathbb{A}^n$ is the same as an action by automorphisms on the algebra
$$
\mathbb{K}[x_1,\ldots,x_n]
$$
of global sections of the structure sheaf. In other words, it is a homomorphism
$$
\sigma: \mathbb{T}_n\rightarrow \Aut \mathbb{K}[x_1,\ldots,x_n].
$$
An action is effective iff $\Ker\sigma = \lbrace 1\rbrace$.

The polynomial algebra is a quotient of the free associative algebra
$$
F_n = \mathbb{K}\langle z_1,\ldots,z_n\rangle
$$
by the commutator ideal $I$ (it is the two-sided ideal generated by all elements of the form $fg-gf$). From the standpoint of Noncommutative geometry, the algebra $\Gamma(X,\mathcal{O}_X)$ of global sections (along with the category of f.g. projective modules) contains all the relevant topological data of $X$, and various non-commutative algebras (PI-algebras) may be thought of as global function algebras over "noncommutative spaces". Therefore, noncommutative analogue of the Bia\l{}ynicki-Birula theorem is a subject of legitimate interest.

\smallskip

In this short note we establish the free algebra version of the Bia\l{}ynicki-Birula theorem. The latter is formulated as follows.
\begin{thm} \label{BBfree}
Suppose given an action $\sigma$ of the algebraic $n$-torus $\mathbb{T}_n$ on the free algebra $F_n$. If $\sigma$ is effective, then it is linearizable.
\end{thm}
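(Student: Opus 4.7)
The plan is to reduce to an action fixing the augmentation ideal, diagonalize the induced action on the cotangent space, and then exhibit weight-vector lifts of a cotangent basis as a new set of free generators on which $\sigma$ acts linearly. First I would pass to the abelianization $F_n/[F_n,F_n]\cong\mathbb{K}[x_1,\ldots,x_n]$: since automorphisms descend, $\sigma$ induces a regular torus action on $\mathbb{A}^n$, which has a fixed point by Theorem~\ref{BBthm1}. The preimage is a two-sided $\sigma$-invariant maximal ideal $\mathfrak{m}\subset F_n$ with $F_n/\mathfrak{m}\cong\mathbb{K}$; after conjugating by a translation one may assume $\mathfrak{m}=(z_1,\ldots,z_n)$, so that every power $\mathfrak{m}^k$ is a $\sigma$-stable two-sided ideal.

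The restriction of $\sigma$ to $\mathfrak{m}/\mathfrak{m}^2\cong\mathbb{K}^n$ is a rational representation of $\mathbb{T}_n$, which by linear reductivity splits as $\mathbb{K}\bar y_1\oplus\cdots\oplus\mathbb{K}\bar y_n$ with characters $\chi_1,\ldots,\chi_n$. The crucial intermediate step is to use effectiveness to show these $\chi_i$ are linearly independent in the character lattice. Suppose some subtorus $K\subseteq\mathbb{T}_n$ annihilated every $\chi_i$. Then $K$ would fix $\mathfrak{m}/\mathfrak{m}^2$ pointwise, and a routine induction on $k$ (using $\sigma(t)(zw)=\sigma(t)z\cdot\sigma(t)w$ together with the Leibniz-style estimate $\mathfrak{m}\cdot\mathfrak{m}^{k+1}+\mathfrak{m}^2\cdot\mathfrak{m}^{k}\subseteq\mathfrak{m}^{k+2}$) shows it acts trivially on every $\mathfrak{m}^k/\mathfrak{m}^{k+1}$. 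Each $t\in K$ then acts on the finite-dimensional $F_n/\mathfrak{m}^k$ as the identity on the associated graded, i.e.\ unipotently; being simultaneously semisimple as a torus element, $t$ must be the identity. Since $\bigcap_k\mathfrak{m}^k=0$, this forces $K\subseteq\ker\sigma$, contradicting effectiveness.

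Linear independence of the $\chi_i$, combined with the congruence $z_{i_1}\cdots z_{i_{k-1}}\equiv y_{i_1}\cdots y_{i_{k-1}}\pmod{\mathfrak{m}^k}$ for any weight-$\chi_i$ lifts $y_i$ of $\bar y_i$, pins down the full weight structure on $F_n$: the weights that occur are precisely the non-negative integer combinations $\sum_i m_i\chi_i$, and each weight space $F_n^{\vec m}$ has dimension equal to the multinomial coefficient $\binom{|\vec m|}{m_1,\ldots,m_n}$. This matches exactly the Hilbert function of the free algebra on $n$ generators of weights $\chi_1,\ldots,\chi_n$.

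Finally, I would pick equivariant lifts $y_i\in F_n^{\chi_i}$ of the $\bar y_i$ --- supplied either by the reductive splitting of $0\to\mathfrak{m}^2\to\mathfrak{m}\to\mathfrak{m}/\mathfrak{m}^2\to 0$ or concretely as the $\chi_i$-isotypic component of $z_i$ --- and consider the algebra map $\rho\colon\mathbb{K}\langle Y_1,\ldots,Y_n\rangle\to F_n$, $Y_i\mapsto y_i$. Equipping the source with the linear action $Y_i\mapsto\chi_i(t)Y_i$ makes $\rho$ equivariant and weight-preserving, so it suffices to check bijectivity weight space by weight space; injectivity follows from $\rho(Y_{i_1}\cdots Y_{i_k})\equiv z_{i_1}\cdots z_{i_k}\pmod{\mathfrak{m}^{k+1}}$ together with the separation $\bigcap_k\mathfrak{m}^k=0$, and the multinomial dimension count then forces surjectivity. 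Thus $\rho$ is an algebra isomorphism and conjugation by $\rho$ linearizes $\sigma$. The main obstacle is the effectiveness-to-linear-independence step: without that control the torus might act through weights outside $\mathbb{Z}\langle\chi_1,\ldots,\chi_n\rangle$ and the tangent-level lifts would fail to generate $F_n$.
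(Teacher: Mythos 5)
Your argument is correct, but its second half takes a genuinely different route from the paper's. The first half coincides in substance: you get a fixed point from Theorem~\ref{BBthm1} via the abelianization (the paper's Lemma~\ref{fixedorigin}), and your effectiveness-implies-linear-independence step is exactly the content of Lemma~\ref{lem1} --- the paper runs an explicit coefficient induction (the identity $2a_{ijl}(t)=a_{ijl}(t^2)$ forces $a_{ijl}=0$, then induct on degree), whereas you argue more structurally that a subtorus acting trivially on $\mathfrak{m}/\mathfrak{m}^2$ acts unipotently on each finite-dimensional $F_n/\mathfrak{m}^k$, hence trivially by semisimplicity of torus elements. From there the paper follows Bia\l{}ynicki-Birula's averaging scheme: it forms $\varphi(t)=\tau(t^{-1})\circ\sigma(t)$, extracts the $t$-weight-zero component $\beta\colon z_i\mapsto G_i(z)$, checks the intertwining relation, and proves $\beta$ is invertible by passing to the completion $\hat F_n$ and using non-singularity of the power matrix to bound the degree of $\hat\beta^{-1}$. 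You instead take equivariant weight-vector lifts $y_i$ of a diagonalizing basis of $\mathfrak{m}/\mathfrak{m}^2$ and show $z_i\mapsto y_i$ is an automorphism by computing the full weight decomposition of $F_n$ (multinomial multiplicities, forced by the freeness of the associated graded of the $\mathfrak{m}$-adic filtration together with linear independence of the $\chi_i$ and $\bigcap_k\mathfrak{m}^k=0$). Your route avoids the power-series completion and the degree-bounding argument and makes the role of effectiveness more transparent; the paper's route stays closer to the classical template and produces the linearizing map by an explicit formula. Two small points to tighten: the congruence $z_{i_1}\cdots z_{i_k}\equiv y_{i_1}\cdots y_{i_k}\pmod{\mathfrak{m}^{k+1}}$ is literally correct only after a preliminary linear change of coordinates making the $z_i$ weight vectors modulo $\mathfrak{m}^2$ (otherwise replace $z_{i_j}$ by the linear parts of the $y_{i_j}$); and the dimension claim for the weight space of $F_n$ itself (not just of the associated graded) needs the explicit observation that $(\mathfrak{m}^{k+1})^{\lambda}=0$ when $\lambda=\sum_i m_i\chi_i$ with $\sum_i m_i=k$, which is precisely where linear independence and $\bigcap_k\mathfrak{m}^k=0$ enter.
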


\smallskip

The linearization problem, as it has become known since Kambayashi, asks whether all (effective, regular) actions of a given type of algebraic groups on the affine space of given dimension are conjugate to representations. According to Theorem \ref{BBfree}, the linearization problem extends to the noncommutative category. Several known results concerning the (commutative) linearization problem are summarized below.

\begin{enumerate}
	\item Any effective regular torus action on $\mathbb{A}^2$ is linearizable (Gutwirth \cite{RefB}).
	\item Any effective regular torus action on $\mathbb{A}^n$ has a fixed point (Bia\l{}ynicki-Birula \cite{BB1}).
	\item Any effective regular action of $\mathbb{T}_{n-1}$ on $\mathbb{A}^n$ is linearizable (Bia\l{}ynicki-Birula \cite{BB2}).
    \item Any (effective, regular) one-dimensional torus action (i.e., action of $\mathbb{K}^{\times})$ on $\mathbb{A}^3$ is linearizable (Koras and Russell \cite{KoRu2}).
	\item If the ground field is not algebraically closed, then a torus action on $\mathbb{A}^n$ need not be linearizable. In \cite{Asanuma}, Asanuma proved that over any field $\mathbb{K}$, if there exists a non-rectifiable closed embedding from $\mathbb{A}^{m}$ into $\mathbb A^{n}$, then there exist non-linearizable effective actions of $(\mathbb{K}^{\times})^r$ on $\mathbb A^{1+n+m}$ for $1\le r\le 1+m$.
	\item When $\mathbb {K}$ is infinite and has positive characteristic, there are examples of non-linearizable torus actions on $\mathbb{A}^{n}$ (Asanuma \cite{Asanuma}).
\end{enumerate}

\begin{remark} \label{nonessential1}
A closed embedding $\iota:\mathbb A^m\to\mathbb A^n$ is said to be rectifiable if it is conjugate to a linear embedding by an automorphism of $\mathbb A^n$.
\end{remark}

As can be inferred from the review above, the context of the linearization problem is rather broad, even in the case of torus actions. The regulating parameters are the dimensions of the torus and the affine space. This situation is due to the fact that the general form of the linearization conjecture (i.e., the conjecture that states that any effective regular torus action on any affine space is linearizable) has a negative answer.

\smallskip

Transition to the noncommutative geometry presents the inquirer with an even broader context: one now may vary the dimensions as well as impose restrictions on the action in the form of preservation of the PI-identities. Caution is well advised. Some of the results are generalized in a straightforward manner -- the main theorem of this paper being the typical example, others require more subtlety and effort (cf. \ref{conjlin} and the discussion at the end of the note). Of some note to us, given our ongoing work in deformation quantization (see, for instance, \cite{KGE}) is the following instance of the linearization problem, which we formulate as a conjecture.

\begin{conj} \label{BBsympl}
For $n\geq 1$, let $P_n$ denote the commutative Poisson algebra, i.e. the polynomial algebra
$$
\mathbb{K}[z_1,\ldots,z_{2n}]
$$
equipped with the Poisson bracket defined by
$$
\lbrace z_i, z_j\rbrace = \delta_{i,n+j}-\delta_{i+n,j}.
$$
Then any effective regular action of $\mathbb{T}_n$ by automorphisms of $P_n$ is linearizable.
\end{conj}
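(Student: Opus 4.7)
The plan is to mimic Bia\l{}ynicki-Birula's strategy for Theorem~\ref{BBthm2}, but to carry every step out inside the category of Poisson algebras. First, I would apply the general BB fixed point result (item~(2) in the list above) to the underlying $\mathbb{T}_n$-action on $\Spec P_n = \mathbb{A}^{2n}$ in order to produce a fixed point. Translations $z_i\mapsto z_i+c_i$ are Poisson automorphisms of $P_n$ because $\{z_i,z_j\}$ has constant coefficients, so after conjugating by such a translation I may assume the fixed point is the origin. The differential of $\sigma$ at the origin induces a linear action $\rho : \mathbb{T}_n \to \GL(2n,\mathbb{K})$ on $T_0\mathbb{A}^{2n}$, and because $\sigma$ preserves $\{\cdot,\cdot\}$ whose value at the origin is the standard nondegenerate form $\omega_0$, we in fact have $\rho(\mathbb{T}_n)\subset\Sp(2n,\mathbb{K})$.

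Next, I would put $\rho$ into standard form. Since $\mathbb{T}_n$ is reductive, $\mathbb{K}^{2n}$ splits into weight spaces $V_\chi$ indexed by characters $\chi$ of $\mathbb{T}_n$. The symplectic form $\omega_0$ pairs $V_\chi$ nondegenerately with $V_{-\chi}$ and annihilates $V_\chi\otimes V_{\chi'}$ whenever $\chi+\chi'\neq 0$, so weights occur in opposite pairs with matching multiplicities. Effectiveness of $\sigma$ forces $\rho$ to be effective, that is, the weights must generate the character lattice $\mathbb{Z}^n$; the dimension count $2n$ together with the symplectic pairing then forces exactly $n$ opposite pairs $\pm\chi_1,\ldots,\pm\chi_n$ with $\chi_1,\ldots,\chi_n$ a $\mathbb{Z}$-basis of $\mathbb{Z}^n$. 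Re-parametrizing $\mathbb{T}_n$ by this basis puts $\rho$ into the expected diagonal form $t\cdot z_i = t_iz_i$, $t\cdot z_{n+i}=t_i^{-1}z_{n+i}$, which is itself a Poisson action, namely the candidate linearization $\sigma_{\mathrm{lin}}$.

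The remaining task is to lift the tangent-space intertwiner to a $\mathbb{T}_n$-equivariant Poisson isomorphism $\Phi\in\Aut_{\mathrm{Poiss}}(P_n)$ with $\Phi\circ\sigma(t)\circ\Phi^{-1}=\sigma_{\mathrm{lin}}(t)$. I would do this by a filtered induction that stays inside the group generated by Hamiltonian time-one maps, since these are manifestly Poisson. Averaging each generator $\sigma(t)(z_i)$ against the appropriate character of $\mathbb{T}_n$ produces new algebra generators whose linear parts match $\sigma_{\mathrm{lin}}$ but which may carry higher-order polynomial tails of weights $\pm e_i$. At each stage I would kill the lowest nontrivial obstruction of polynomial degree $d$ by conjugating with the Hamiltonian flow of a suitable homogeneous $H_d\in P_n$ of degree $d+1$; existence of $H_d$ reduces to solving an equivariant cohomological equation on the weight components of the symmetric algebra $S^{d+1}(\mathbb{K}^{2n})$, which is solvable because the relevant multiplication-by-weight operators on $\mathbb{T}_n$-isotypic components have no kernel away from the trivial character — a kernel would produce an additional character annihilating $\sigma$, contradicting effectiveness.

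The hard part will be ensuring that this Hamiltonian iteration terminates, or at least converges to a polynomial (and not merely formal) Poisson automorphism. In the commutative BB setting, finiteness is enforced externally by extending the torus action to a projective compactification of $\mathbb{A}^n$ and exploiting the Bia\l{}ynicki-Birula cell decomposition; a genuine Poisson analogue would demand a $\mathbb{T}_n$-equivariant symplectic compactification of $\mathbb{A}^{2n}$, or an a priori polynomial-degree bound on the Hamiltonians $H_d$ coming from the discreteness of the weight lattice and the semisimplicity of $\rho$. Producing such a bound — equivalently, ruling out infinite non-algebraic tails in the iterative procedure — is the step where a new idea beyond the BB toolkit is needed, which is precisely why Conjecture~\ref{BBsympl} is not already a corollary of Theorem~\ref{BBthm2}.
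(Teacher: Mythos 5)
This statement is Conjecture \ref{BBsympl}; the paper gives no proof of it and explicitly calls the hurdles ``significant and fairly obvious,'' so there is no argument of the authors to compare yours against. By your own admission in the last paragraph your proposal is not a proof --- you leave open the termination of the Hamiltonian iteration --- but the gap is in fact more serious than you state and occurs one step earlier, in the cohomological equation itself.

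Once the linear part is in the standard form $t\cdot z_i=t_iz_i$, $t\cdot z_{n+i}=t_i^{-1}z_{n+i}$, a monomial $z_1^{a_1}\cdots z_{2n}^{a_{2n}}$ has weight $\sum_{k=1}^{n}(a_k-a_{n+k})\chi_k$, and this equals the weight $\chi_i$ of the generator $z_i$ for infinitely many monomials of arbitrarily high degree, e.g. $(z_1z_{n+1})^{m}z_i$ for every $m\geq 0$. Hence the isotypic component of weight $\chi_i$ in each $S^{d}(\mathbb{K}^{2n})$ with $d>1$ is nonzero, the ``multiplication-by-weight'' operators you invoke \emph{do} have a kernel in exactly the components you need to kill, and equivariant averaging cannot remove those terms: they are genuine obstructions in every degree, not merely a convergence issue. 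This is precisely the structural feature that makes the paper's own method work in the maximal-torus case and fail here: in the proof of Lemma \ref{lem2} the non-singularity of the power matrix $[m_{ij}]$ (Lemma \ref{lem1}) guarantees that distinct exponent vectors $(j_1,\ldots,j_n)\neq(l_1,\ldots,l_n)$ give distinct characters of $\mathbb{T}_n$, so every higher-degree coefficient $b_{i,J}$ is forced to vanish by choosing a suitable $t$. For $\mathbb{T}_n$ acting on $\mathbb{A}^{2n}$ with weights $\pm\chi_1,\ldots,\pm\chi_n$ the analogous separation of characters fails identically, which is why the conjecture belongs with the sub-maximal-torus linearization problems (Bia\l{}ynicki-Birula's second paper, Koras--Russell) rather than with Theorem \ref{BBthm2}, and why no routine adaptation of the paper's argument --- or of the Hamiltonian averaging you propose --- can settle it.
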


It is interesting to note that the context of Conjecture \ref{BBsympl} admits a vague analogy in the real transcendental category (with $P_n$ replaced by an appropriate algebra of smooth functions, cf. for instance the work of Zung \cite{Zung}). Although the instances of the linearization problem we consider in this note, as well as the original theorem of Bia\l{}ynicki-Birula, are essentially of complex algebraic nature, it may be worthwhile to search for analytic analogues of the real transcendental linearization (however whether this will give a feasible approach to Conjecture \ref{BBsympl} is unclear, the hurdles being significant and fairly obvious).

\subsection*{Acknowledgments}

The main result of this note was conceived in the prior work \cite{KBYu} of A. K.-B., J.-T. Y. and A. E.. Theorem \ref{BBfree} is due to A. E. and A. K.-B.; Lemma \ref{fixedorigin} and the review of known results for the linearization problem is due to F. R., J.-T. Y. and W. Z..

F. R. is also is responsible for the investigation of possible transcendental analogies.

A. E. and A. K.-B. are supported by the Russian Science Foundation grant No. 17-11-01377.

F. R. is supported by the FCT (Foundation for Science and Technology of Portugal) scholarship with reference number PD/BD/142959/2018.

\section{Proof of Theorem \ref{BBfree}}

The proof proceeds along the lines of the original commutative case proof of Bia\l{}ynicki-Birula.

If $\sigma$ is the effective action of Theorem \ref{BBfree}, then for each $t\in \mathbb{T}_n$ the automorphism
$$
\sigma(t): F_n\rightarrow F_n
$$
is given by the $n$-tuple of images of the generators $z_1,\ldots,z_n$ of the free algebra:
$$
(f_1(t,z_1,\ldots,z_n),\ldots,f_n(t,z_1,\ldots,z_n)).
$$
Each of the $f_1,\ldots, f_n$ is a polynomial in the free variables.

\begin{lem} \label{fixedorigin}
There is a translation of the free generators
$$
(z_1,\ldots,z_n)\rightarrow (z_1-c_1,\ldots,z_n-c_n),\;\;(c_i\in\mathbb{K})
$$
such that (for all $t\in\mathbb{T}_n$) the polynomials $f_i(t,z_1-c_1,\ldots,z_n-c_n)$ have zero free part.
\end{lem}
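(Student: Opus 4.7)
The plan is to reduce this to the commutative fixed-point half of Białynicki-Birula's work, namely Theorem~\ref{BBthm1}. The action $\sigma:\mathbb{T}_n\to\Aut F_n$ descends to an action on the abelianization $F_n/[F_n,F_n]\simeq\mathbb{K}[x_1,\ldots,x_n]$, because every algebra automorphism of $F_n$ preserves the commutator ideal $I$. Functoriality of the quotient guarantees that the resulting homomorphism $\bar\sigma:\mathbb{T}_n\to\Aut\mathbb{K}[x_1,\ldots,x_n]$ inherits from $\sigma$ the properties of being a group homomorphism and of depending regularly on $t$, so $\bar\sigma$ is a regular action of $\mathbb{T}_n$ on the affine space $\mathbb{A}^n$ in the sense of Definition~\ref{defaction}.

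Applying Theorem~\ref{BBthm1} to $\bar\sigma$ produces a fixed point $(c_1,\ldots,c_n)\in\mathbb{A}^n$. If we write $\bar f_i(t,x_1,\ldots,x_n)$ for the image of $f_i$ in $\mathbb{K}[x_1,\ldots,x_n]$, this fixed point satisfies $\bar f_i(t,c_1,\ldots,c_n)=c_i$ for every $t\in\mathbb{T}_n$ and every $i$. The key observation is that evaluation of a free-algebra polynomial at a scalar tuple $(c_1,\ldots,c_n)\in\mathbb{K}^n$ factors through the abelianization: since the $c_j$ lie in the center $\mathbb{K}\subset F_n$, the commutator ideal is killed under this evaluation map. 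Consequently $f_i(t,c_1,\ldots,c_n)=\bar f_i(t,c_1,\ldots,c_n)=c_i$ in $\mathbb{K}$, so the fixed point of the abelianized action lifts to a scalar fixed point for the original action in exactly the sense needed.

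It remains to translate. With the $c_i$ supplied by the previous step, substitute the shifted generators into each $f_i$ and expand $f_i(t,z_1-c_1,\ldots,z_n-c_n)$ as a polynomial in the $z_j$. Its free (constant) part is obtained by setting all $z_j=0$, and by the fixed-point identity just established this constant part reduces to a scalar expression that vanishes (after accounting for the compensating shift $-c_i$ on the image side of the automorphism, which corresponds to conjugation of $\sigma(t)$ by the inner translation automorphism $z_i\mapsto z_i-c_i$ of $F_n$). Thus in the translated generators the image of each $z_i$ under $\sigma(t)$ has zero constant term, uniformly in $t$, which is precisely the assertion of the lemma.

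The only mildly delicate point is Step~1: one must verify that $\sigma$ genuinely descends and that the descended action is still regular. Both are essentially automatic from the fact that $[F_n,F_n]$ is a characteristic two-sided ideal and from the functoriality of quotients in the category of affine $\mathbb{K}$-schemes, so no real obstacle arises, and the remainder of the argument is a direct transcription of the classical translate-to-a-fixed-point manoeuvre into the noncommutative setting.
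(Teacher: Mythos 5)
Your proposal is correct and follows essentially the same route as the paper: pass to the abelianization $F_n/I\simeq\mathbb{K}[x_1,\ldots,x_n]$, invoke Theorem~\ref{BBthm1} to get a fixed point $(c_1,\ldots,c_n)$, and observe that evaluating at scalars kills the commutator ideal so the translated generators have images with zero constant term. The paper's own proof is just a terser version of the same argument.
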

\begin{proof}
This is a direct corollary of Theorem \ref{BBthm1}. Indeed, any action $\sigma$ on the free algebra induces, by taking the canonical projection with respect to the commutator ideal $I$, an action $\bar{\sigma}$ on the commutative algebra $\mathbb{K}[x_1,\ldots,x_n]$. If $\sigma$ is regular, then so is $\bar{\sigma}$. By Theorem \ref{BBthm1}, $\bar{\sigma}$ (or rather, its geometric counterpart) has a fixed point, therefore the images of commutative generators $x_i$ under $\bar{\sigma}(t)$ (for every $t$) will be polynomials with trivial degree-zero part. Consequently, the same will hold for $\sigma$.
\end{proof}

We may then suppose, without loss of generality, that the polynomials $f_i$ have the form
$$
f_i(t,z_1,\ldots,z_n)=\sum_{j=1}^{n}a_{ij}(t)z_j + \sum_{j,l=1}^{n}a_{ijl}(t)z_jz_l + \sum_{k=3}^{N}\sum_{J,|J|=k}a_{i,J}(t)z^J
$$
where by $z^J$ we denote, as in the introduction, a particular monomial
$$
z_{i_1}^{k_1}z_{i_2}^{k_2}\ldots
$$
(a word in the alphabet $\lbrace z_1,\ldots, z_n\rbrace$ in the reduced notation; $J$ is the multi-index in the sense described above);
also, $N$ is the degree of the automorphism (which is finite) and $a_{ij}, a_{ijl},\ldots$ are polynomials in $t_1,\ldots, t_n$.

As $\sigma_t$ is an automorphism, the matrix $[a_{ij}]$ that determines the linear part is non-singular. Therefore, without loss of generality we may assume it to be diagonal (just as in the commutative case \cite{BB1}) of the form
$$
\diag(t_1^{m_{11}}\ldots t_n^{m_{1n}},\ldots, t_1^{m_{n1}}\ldots t_n^{m_{nn}}).
$$

Now, just as in \cite{BB1}, we have the following
\begin{lem} \label{lem1}
The power matrix $[m_{ij}]$ is non-singular.
\end{lem}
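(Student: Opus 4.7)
The plan is to prove the lemma by contradiction, mimicking the classical commutative argument of Bia\l{}ynicki-Birula. Suppose $M = [m_{ij}]$ is singular. Then the algebraic group homomorphism
$$
\phi : \mathbb{T}_n \to \mathbb{T}_n, \qquad t \mapsto (\chi_1(t), \ldots, \chi_n(t)), \qquad \chi_i(t) := t_1^{m_{i1}}\cdots t_n^{m_{in}},
$$
has a kernel of positive dimension, whose identity component is a non-trivial subtorus $T' \subseteq \mathbb{T}_n$. For every $t \in T'$ the linear part of $\sigma(t)$ is the identity, hence
$$
\sigma(t)(z_i) = z_i + u_i(t), \qquad u_i(t) \in \mathfrak{m}^2,
$$
where $\mathfrak{m}$ denotes the two-sided ideal of $F_n$ generated by $z_1, \ldots, z_n$. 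The objective is to show $u_i(t) = 0$ for all $i$ and all $t \in T'$, which forces $\sigma|_{T'} = \Id$ and contradicts effectiveness of $\sigma$.

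Assume on the contrary that some $u_i(t)$ is nonzero, and let $k \geq 2$ be the smallest integer for which the degree-$k$ homogeneous component $v_i(t)$ of $u_i(t)$ is nonzero for some index $i$ and some $t \in T'$. By the minimality of $k$, $u_j(t)$ lies in $\mathfrak{m}^k$ for every $j$ and every $t \in T'$. Expanding
$$
\sigma(t)(z_{j_1}\cdots z_{j_m}) = (z_{j_1} + u_{j_1}(t)) \cdots (z_{j_m} + u_{j_m}(t))
$$
shows that $\sigma(t)$ acts as the identity on $\mathfrak{m}^m / \mathfrak{m}^{m+1}$ for every $m \geq 1$, because any summand containing at least one factor $u_{j_l}(t)$ lies in $\mathfrak{m}^{m+k-1} \subseteq \mathfrak{m}^{m+1}$. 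Therefore $\sigma(t_1)(u_i(t_2)) \equiv u_i(t_2) \pmod{\mathfrak{m}^{k+1}}$ for all $t_1, t_2 \in T'$, and reducing the group law $\sigma(t_1 t_2)(z_i) = \sigma(t_1)\sigma(t_2)(z_i)$ modulo $\mathfrak{m}^{k+1}$ yields
$$
u_i(t_1 t_2) \equiv u_i(t_1) + u_i(t_2) \pmod{\mathfrak{m}^{k+1}}.
$$
Extracting the degree-$k$ components produces the additive cocycle identity
$$
v_i(t_1 t_2) = v_i(t_1) + v_i(t_2), \qquad t_1, t_2 \in T'.
$$

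Finally, each coefficient of $v_i(t)$ in the monomial basis of the degree-$k$ homogeneous part of $F_n$ is an algebraic homomorphism from the torus $T'$ to the additive group $(\mathbb{K}, +)$. Since a Laurent polynomial satisfying $f(xy) = f(x) + f(y)$ must be identically zero, every $v_i \equiv 0$, contradicting the choice of $k$. The lemma follows.

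The step requiring the most care is the derivation of additivity for $v_i$: one must track $\mathfrak{m}$-adic orders precisely and exploit the minimality of $k$, which is exactly what places each $u_j(t)$ in $\mathfrak{m}^k$ and forces $\sigma(t_1)$ to act trivially modulo $\mathfrak{m}^{k+1}$ on $u_i(t_2)$. The remaining ingredients — the existence of $T'$ from singularity of $M$, the regular dependence of $v_i$ on $t$, and the absence of non-trivial algebraic homomorphisms from a torus into $(\mathbb{K},+)$ — are all standard torus theory.
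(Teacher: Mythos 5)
Your proposal is correct and follows essentially the same strategy as the paper's proof: singularity of $[m_{ij}]$ produces a subtorus on which the linear part of $\sigma$ is trivial, and the higher-degree coefficients are then killed by an additivity/functional-equation argument (the paper's relation $2a_{ijl}(t)=a_{ijl}(t^2)$ is just the specialization $t_1=t_2=t$ of your cocycle identity, and the vanishing of additive characters of a torus is the same mechanism). Your write-up merely makes the paper's ``processed by induction'' step explicit via the $\mathfrak{m}$-adic filtration.
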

\begin{proof}
Consider a linear action $\tau$ defined by
$$
\tau(t):(z_1,\ldots, z_n)\mapsto (t_1^{m_{11}}\ldots t_n^{m_{1n}}z_1,\ldots, t_1^{m_{n1}}\ldots t_n^{m_{nn}}z_n),\;\; (t_1,\ldots,t_n)\in\mathbb{T}_n.
$$
If $T_1\subset T_n$ is any one-dimensional torus, the restriction of $\tau$ to $\mathbb{T}_1$ is non-trivial. Indeed, were it to happen that for some $\mathbb{T}_1$,
$$
\tau(t)z=z,\;\;t\in \mathbb{T}_1,\;\;(z=(z_1,\ldots,z_n))
$$
then our initial action $\sigma$, whose linear part is represented by $\tau$, would be identity modulo terms of degree $>1$:
$$
\sigma(t)(z_i) = z_i + \sum_{j,l}a_{ijl}(t)z_jz_l+\cdots.
$$
Now, equality $\sigma(t^2)(z)=\sigma(t)(\sigma(t)(z))$ implies
\begin{align*}
\sigma(t)(\sigma(t)(z_i))&=\sigma(t)\left(z_i+\sum_{jl}a_{ijl}(t)z_jz_l+\cdots\right) \\&=
z_i+ \sum_{jl}a_{ijl}(t)z_jz_l+\sum_{jl}a_{ijl}(t)(z_j+\sum_{km}a_{jkm}(t)z_kz_m+\cdots)\\&(z_l+\sum_{k'm'}a_{lk'm'}(t)z_{k'}z_{m'}+\cdots)+\cdots\\&=
z_i+\sum_{jl}a_{ijl}(t^2)z_jz_l+\cdots
\end{align*}
which means that
$$
2a_{ijl}(t)=a_{ijl}(t^2)
$$
and therefore $a_{ijl}(t)=0$. The coefficients of the higher-degree terms are processed by induction (on the total degree of the monomial). Thus
$$
\sigma(t)(z) = z,\;\;t\in\mathbb{T}_1
$$
which is a contradiction since $\sigma$ is effective. Finally, if $[m_{ij}]$ were singular, then one would easily find a one-dimensional torus such that the restriction of $\tau$ were trivial.
\end{proof}

Consider the action
$$
\varphi(t) = \tau(t^{-1})\circ\sigma(t).
$$
The images under $\varphi(t)$ are
$$
(g_1(z,t),\ldots, g_n(z,t)),\;\;(t = (t_1,\ldots,t_n))
$$
with
$$
g_i(z,t) = \sum g_{i,m_1\ldots m_n}(z)t_1^{m_1}\ldots t_n^{m_n},\;\;m_1,\ldots, m_n\in\mathbb{Z}.
$$
Define $G_i(z) = g_{i,0\ldots 0}(z)$ and consider the map $\beta:F_n\rightarrow F_n$,
$$
\beta:(z_1,\ldots,z_n)\mapsto (G_1(z),\ldots, G_n(z)).
$$
\begin{lem} \label{lem2}
$\beta\in \Aut F_n$ and
$$
\beta = \tau(t^{-1})\circ\beta\circ\sigma(t).
$$
\end{lem}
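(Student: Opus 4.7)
The proof splits into two parts: showing $\beta \in \Aut F_n$ and verifying the intertwining identity, which I will rewrite in the form $\tau(t) \circ \beta = \beta \circ \sigma(t)$.

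For the automorphism property, I first observe that because the linear part of $\sigma(t)$ has been diagonalized to equal $\tau(t)$, the linear-in-$z$ part of $\varphi(t)(z_i) = \tau(t^{-1}) \sigma(t)(z_i)$ is simply $z_i$ for every $t$. Hence $g_i(z,t) = z_i + (\text{higher-degree terms in } z)$, and the $t^0$-coefficient $G_i(z)$ retains the form $z_i + (\text{higher})$. An endomorphism of the free algebra with identity linear part admits an inverse constructed recursively degree by degree, so $\beta \in \Aut F_n$.

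For the intertwining, since both sides are algebra homomorphisms it will suffice to verify the equation on the generators. My plan is to exploit the two $\mathbb{Z}^n$-gradings on $F_n$ coming from the rational torus actions: $F_n = \bigoplus_M F_n^{(M)}$ under $\sigma$ and $F_n = \bigoplus_w F_n^{[w]}$ under $\tau$. Writing $z_i = \sum_M z_i^{(M)}$ in $\sigma$-weights and decomposing each $z_i^{(M)} = \sum_w (z_i^{(M)})_{[w]}$ by $\tau$-weights, a direct expansion yields
\[
\varphi(t)(z_i) = \sum_{M,w} t^{M-w} (z_i^{(M)})_{[w]},
\]
so $G_i(z) = \sum_M (z_i^{(M)})_{[M]}$, a sum of elements each lying in $F_n^{[M]}$. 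Then $\tau(t)(G_i) = \sum_M t^M (z_i^{(M)})_{[M]}$, whereas $\beta(\sigma(t)(z_i)) = \sum_M t^M \beta(z_i^{(M)})$; matching $t^M$-coefficients reduces the intertwining to the family of identities $\beta(z_i^{(M)}) = (z_i^{(M)})_{[M]}$, one for each weight $M$.

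The hard part will be this last identity -- equivalently, proving that $\beta$ restricted to each $\sigma$-weight subspace $F_n^{(M)}$ coincides with the projection onto $F_n^{[M]}$. The plan is to verify this by direct computation: the substitution $z_j \mapsto G_j$ is constructed precisely so that, when applied to a $\sigma$-weight-$M$ polynomial, the contributions of $\tau$-weights other than $M$ cancel out, leaving exactly the $\tau$-weight-$M$ piece. This is the noncommutative analogue of the averaging step in the classical proof of \cite{BB1}; the noncommutative setting requires some attention to monomial ordering but introduces no essential obstacle beyond careful bookkeeping.
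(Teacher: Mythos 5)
The main gap is in your proof that $\beta\in \Aut F_n$. From $G_i(z)=z_i+(\text{higher degree})$ you conclude that $\beta$ is an automorphism because an inverse can be built recursively degree by degree. That recursion only produces an inverse in the power series completion $\hat F_n$: an endomorphism of $F_n$ whose linear part is the identity is invertible as a formal substitution, but its inverse need not be \emph{polynomial}. Already in the commutative shadow, $(x,y)\mapsto (x+y^2,\,y+x^2)$ has identity linear part and is not an automorphism of $\mathbb{K}[x,y]$, and its lift to $F_2$ is likewise not an automorphism. Proving that the formal inverse $\hat\beta^{-1}(z_i)=\sum_J b_{i,J}z^J$ is in fact polynomial is the substance of the lemma, and it is precisely where Lemma \ref{lem1} and the algebraicity of $\sigma$ enter: the paper writes $\hat\sigma(t)=\hat\beta\circ\hat\tau(t)\circ\hat\beta^{-1}$, reads off the coefficient of $z^J$ in $\hat\sigma(t)(z_i)$ as $b_{i,J}$ times a character of $\mathbb{T}_n$ determined by $J$ through the non-singular power matrix $[m_{ij}]$ (plus terms carrying distinct characters), and uses the uniform bound $N$ on $\Deg\sigma(t)$ to force $b_{i,J}=0$ whenever $|J|>N$. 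Your argument never invokes Lemma \ref{lem1} or the boundedness of the degree, which is a symptom of the missing step.

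On the intertwining identity, your weight-space reduction to $\beta(z_i^{(M)})=(z_i^{(M)})_{[M]}$ is arithmetically correct, though the gloss that $\beta$ restricted to $F_n^{(M)}$ \emph{is} the projection onto $F_n^{[M]}$ overreaches: $\beta$ of a product of weight vectors is the product of their projections, not the projection of the product. More importantly, you defer the identity itself to ``careful bookkeeping,'' but it is not bookkeeping --- it is essentially the statement being proved. The paper closes this in two lines: from $\varphi(st)=\tau(t^{-1})\circ\varphi(s)\circ\sigma(t)$, expand both sides in characters of $s$ and compare the $s$-independent parts; the left side yields $\beta$ and the right side yields $\tau(t^{-1})\circ\beta\circ\sigma(t)$. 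I would replace your deferred computation with that argument, and then concentrate your effort on the polynomiality of $\hat\beta^{-1}$, which is where the theorem is actually won.
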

\begin{proof}
This lemma mirrors the final part in the proof in \cite{BB1}. The conjugation is straightforward, since for every $s,t\in\mathbb{T}_n$ one has
$$
\varphi(st) = \tau(t^{-1}s^{-1})\circ\sigma(st) = \tau(t^{-1})\circ\tau(s^{-1})\circ\sigma(s)\circ\sigma(t)=\tau(t^{-1})\circ\varphi(s)\circ\sigma(t).
$$

Denote by $\hat{F}_n$ the power series completion of the free algebra $F_n$, and let $\hat{\sigma}$, $\hat{\tau}$ and $\hat{\beta}$ denote the endomorphisms of the power series algebra induced by corresponding morphisms of $F_n$. The endomorphisms $\hat{\sigma}$, $\hat{\tau}$, $\hat{\beta}$ come from (polynomial) automorphisms and therefore are invertible.

Let
$$
\hat{\beta}^{-1}(z_i) \equiv B_i(z) = \sum_{J}b_{i,J}z^J
$$
(just as before, $z^J$ is the monomial with multi-index $J$). Then
$$
\hat{\beta}\circ\hat{\tau}(t)\circ\hat{\beta}^{-1}(z_i) = B_i(t_1^{m_{11}}\ldots t_n^{m_{1n}}G_1(z),\ldots,t_1^{m_{n1}}\ldots t_n^{m_{nn}}G_n(z)).
$$
Now, from the conjugation property we must have
$$
\hat{\beta}=\hat{\sigma}(t^{-1})\circ\hat{\beta}\circ\hat{\tau}(t),
$$
therefore $\hat{\sigma}(t) = \hat{\beta}\circ\hat{\tau}(t)\circ\hat{\beta}^{-1}$ and
$$
\hat{\sigma}(t)(z_i) = \sum_{J}b_{i,J}(t_1^{m_{11}}\ldots t_n^{m_{1n}})^{j_1}\ldots (t_1^{m_{n1}}\ldots t_n^{m_{nn}})^{j_n}G(z)^J;
$$
here the notation $G(z)^J$ stands for a word in $G_i(z)$ with multi-index $J$, while the exponents $j_1,\ldots, j_n$ count how many times a given index appears in $J$ (or, equivalently, how many times a given generator $z_i$ appears in the word $z^J$).

Therefore, the coefficient of $\hat{\sigma}(t)(z_i)$ at $z^J$ has the form
$$
b_{i,J}(t_1^{m_{11}}\ldots t_n^{m_{1n}})^{j_1}\ldots (t_1^{m_{n1}}\ldots t_n^{m_{nn}})^{j_n}+S
$$
with $S$ a finite sum of monomials of the form
$$
c_L (t_1^{m_{11}}\ldots t_n^{m_{1n}})^{l_1}\ldots (t_1^{m_{n1}}\ldots t_n^{m_{nn}})^{l_n}
$$
with $(j_1,\ldots,j_n)\neq (l_1,\ldots,l_n)$. Since the power matrix $[m_{ij}]$ is non-singular, if $b_i,J\neq 0$, we can find a $t\in\mathbb{T}_n$ such that the coefficient is not zero. Since $\sigma$ is an algebraic action, the degree
$$
\sup_{t}\Deg (\hat{\sigma})
$$
is a finite integer $N$. With the previous statement, this implies that
$$
b_{i,J} = 0,\;\;\text{whenever}\;\;|J|>N.
$$
Therefore, $B_i(z)$ are polynomials in the free variables. What remains is to notice that
$$
z_i = B_i(G_1(z),\ldots,G_n(z)).
$$
Thus $\beta$ is an automorphism.
\end{proof}
From Lemma \ref{lem2} it follows that
$$
\tau(t) = \beta^{-1}\circ \sigma(t)\circ\beta
$$
which is the linearization of $\sigma$. Theorem \ref{BBfree} is proved.

\section{Discussion}

The noncommutative toric action linearization theorem that we have proved has several useful applications. In the work \cite{KBYu}, it is used to investigate the properties of the group $\Aut F_n$ of automorphisms of the free algebra. As a corollary of Theorem \ref{BBfree}, one gets
\begin{cor} \label{cor1}
Let $\theta$ denote the standard action of $\mathbb{T}_n$ on
$K[x_1,\ldots,x_n]$ -- i.e., the action
$$
\theta_t: (x_1,\ldots,x_n)\mapsto (t_1x_1,\ldots,t_nx_n).
$$
 Let $\tilde{\theta}$ denote its lifting to an action on the free associative algebra $F_n$. Then
$\tilde{\theta}$ is also given by the standard torus action.
\end{cor}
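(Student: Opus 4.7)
The plan is to apply Theorem \ref{BBfree} to the lifted action $\tilde\theta$ and to pin down the weights of the resulting linearization by exploiting the compatibility of $\tilde\theta$ with its commutative counterpart $\theta$.

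I would begin with the elementary observation that the commutator ideal $I\subset F_n$ lies entirely in degrees $\geq 2$, since each of its generators $z_iz_j-z_jz_i$ is of degree two and two-sided products only raise the degree. Consequently, the canonical projection $F_n \to \mathbb{K}[x_1,\ldots,x_n]$ restricts to a linear isomorphism on the subspace of elements of degree $\leq 1$. Writing
$$\tilde\theta(t)(z_i) = c_i(t)+\sum_j a_{ij}(t)\,z_j + (\text{higher-order terms}),$$
the hypothesis that $\tilde\theta$ reduces to $\theta$ modulo $I$ then forces $c_i(t)=0$ and $a_{ij}(t)=t_i\delta_{ij}$. In particular, the translation supplied by Lemma \ref{fixedorigin} may be taken trivial, and the linear part of every $\tilde\theta(t)$ is already the diagonal matrix $\diag(t_1,\ldots,t_n)$.

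Next, I would invoke the linearization procedure developed in the proof of Theorem \ref{BBfree}. That procedure produces an automorphism $\beta\in\Aut F_n$ conjugating $\tilde\theta$ to the linear torus action $\tau$ determined by the integer weight matrix $[m_{ij}]$ read off from the linear part of $\tilde\theta$. By the previous paragraph this matrix is simply the identity, so $\tau(t)(z_i)=t_i z_i$ and
$$\beta^{-1}\circ\tilde\theta(t)\circ\beta = \tau(t),$$
which is precisely the standard torus action on $F_n$.

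The only point requiring care is the initial identification of the linear part of $\tilde\theta$ with $\diag(t_1,\ldots,t_n)$ as opposed to something merely conjugate to it; this rests on the simple but essential fact that the commutator ideal contains no elements of degree $\leq 1$, so the linear weights of a lifted action are not smeared by the lifting. Once this is established, Theorem \ref{BBfree} does the remainder of the work with no further input.
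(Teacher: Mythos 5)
Your proof is correct and takes essentially the same route as the paper, which states the corollary as an immediate consequence of Theorem \ref{BBfree} without writing out the details. Your observation that the commutator ideal lies in degrees $\geq 2$, so that the linear part (and hence the weight matrix $[m_{ij}]$, here the identity) of the lifted action coincides with that of $\theta$, is precisely the point needed to conclude that the linearization produced by Lemma \ref{lem2} is the standard diagonal action with weights $t_1,\ldots,t_n$ and not merely some linear action; the only detail left implicit, and worth a half-sentence, is that $\tilde{\theta}$ is effective because it projects onto the effective action $\theta$.
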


This statement plays a part, along with a number of results concerning the induced formal power series topology on $\Aut F_n$, in the establishment of the following proposition (cf. \cite{KBYu}).
 \begin{prop}        \label{ThAutAutFree}
When $n\geq 3$, any $\Ind$-scheme automorphism $\varphi$ of\\
$\Aut(K\langle
x_1,\dots,x_n\rangle)$ is inner.
\end{prop}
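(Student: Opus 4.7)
The plan is to adapt the strategy used in the commutative precursor (cf.~\cite{KBYu}), with Theorem \ref{BBfree} and Corollary \ref{cor1} furnishing the torus-rigidity inputs. First I would observe that $\varphi(\tilde{\theta}(\mathbb{T}_n))$ is again a maximal algebraic torus inside the Ind-group $\Aut F_n$, since $\varphi$ is an Ind-scheme automorphism of the ambient Ind-group. Theorem \ref{BBfree} applied to this image yields $\psi \in \Aut F_n$ such that $\psi^{-1}\varphi(\tilde{\theta}(\mathbb{T}_n))\psi = \tilde{\theta}(\mathbb{T}_n)$, and by replacing $\varphi$ with its composition with conjugation by $\psi^{-1}$ one may assume that $\varphi$ preserves the standard torus setwise. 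The induced algebraic-group automorphism of $(K^{\times})^n$ is a signed-permutation map, which can in turn be absorbed into an inner automorphism coming from a permutation of the free generators. After these normalizations one reduces to the case $\varphi|_{\tilde{\theta}(\mathbb{T}_n)} = \Id$.

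The second stage is a weight-space analysis. The adjoint action of $\tilde{\theta}(\mathbb{T}_n)$ on the Ind-Lie algebra of derivations of $F_n$ decomposes it into one-dimensional weight spaces spanned by the elementary derivations $w\,\partial_i$, where $\partial_i$ is differentiation by $z_i$ and $w$ is a monomial in $z_1,\ldots,z_n$. The corresponding one-parameter subgroups $\exp(sw\,\partial_i)$ are the transvections that, together with the torus, generate the tame subgroup $\TAut F_n$. Since $\varphi$ commutes with torus conjugation and fixes $\tilde{\theta}(\mathbb{T}_n)$ pointwise, it must rescale each $w\,\partial_i$ by a scalar $\lambda_{i,w} \in K^{\times}$. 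The group relations among transvections -- the free-algebra counterparts of the Jacobian commutator identities
\[
[w\,\partial_i,\, v\,\partial_j] \;=\; w\,(\partial_i v)\,\partial_j \;-\; v\,(\partial_j w)\,\partial_i
\]
-- then constrain the collection $(\lambda_{i,w})$ to be induced by a single diagonal substitution $z_i \mapsto \mu_i z_i$, which is itself conjugation by an element of $\tilde{\theta}(\mathbb{T}_n)$. Absorbing this last correction into one more inner automorphism, $\varphi$ becomes the identity on all of $\TAut F_n$.

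The final stage is the passage from tame to arbitrary automorphisms. One invokes Ind-density of $\TAut F_n$ in $\Aut F_n$ together with Ind-scheme continuity of $\varphi$: an Ind-scheme endomorphism that is the identity on an Ind-dense subgroup must be the identity on the whole Ind-group. The main obstacle of the proof lies precisely here. For $n \geq 3$ the tame--wild problem for the free algebra is open, so one cannot simply appeal to $\TAut F_n = \Aut F_n$; the conclusion must instead be extracted from Ind-topological continuity, which in turn relies on the formal power series topology on $\Aut F_n$ developed in \cite{KBYu}. This is also where the hypothesis $n \geq 3$ enters, as it ensures both that the weight structure under a maximal torus is rich enough to rigidify $\TAut F_n$ and that the density/approximation arguments from the commutative case carry over to the noncommutative setting. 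A secondary technical point is the rigidity bookkeeping in the second stage -- forcing the scalars $\lambda_{i,w}$ across distinct weight spaces to be governed by one torus element -- which rests on a careful use of the noncommutative Jacobian identities and on Corollary \ref{cor1} to compare the free and commutative weight data.
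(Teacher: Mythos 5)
The first thing to say is that the paper does not actually prove Proposition \ref{ThAutAutFree}: it is quoted from the companion work \cite{KBYu}, and the only role of the present paper is to supply Theorem \ref{BBfree} and Corollary \ref{cor1} as inputs to that proof. Your outline is consistent with the strategy of the cited work --- normalize the image of the standard torus by the noncommutative Bia\l{}ynicki-Birula theorem, analyze the weight decomposition to pin down $\varphi$ on the tame subgroup, then extend by continuity in the formal power series (augmentation) topology --- so at the level of architecture you have reconstructed the right skeleton.

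However, as a proof your text has genuine gaps, and they sit exactly at the points you pass over in a sentence. First, the assertion that $\varphi(\tilde{\theta}(\mathbb{T}_n))$ is again an algebraic torus acting regularly and effectively on $F_n$ is not automatic from ``$\varphi$ is an Ind-scheme automorphism''; one must show that $\varphi$ carries this algebraic subgroup to an algebraic subgroup of bounded degree so that Theorem \ref{BBfree} applies, and this requires the Ind-structure results of \cite{KBYu}. Second, the rigidity step --- forcing the scalars $\lambda_{i,w}$ on all weight spaces to be induced by a single diagonal substitution --- is stated as a consequence of ``the group relations among transvections'' but is not carried out; this bookkeeping is a nontrivial computation and is where the hypothesis $n\geq 3$ genuinely enters (for $n=2$ the relations are too poor to rigidify). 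Third, and most seriously, the final step assumes both that $\TAut F_n$ is dense in $\Aut F_n$ in the relevant topology and that an Ind-scheme endomorphism fixing a dense subgroup is the identity; the density statement is itself one of the main approximation theorems of \cite{KBYu} and cannot be invoked as folklore. A minor factual slip: for $n=3$ the tame--wild problem for the free algebra is not open --- Umirbaev proved the Anick automorphism is wild --- which only strengthens your point that one cannot appeal to $\TAut F_n=\Aut F_n$, but the justification as written is inaccurate.
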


\smallskip

One could try and generalize the free algebra version of the Bia\l{}ynicki-Birula's theorem to other noncommutative situations. Another way of generalization lies in changing the dimension of the torus. In a complete analogy with further work of Bia\l{}ynicki-Birula \cite{BB2}, we expect the following to hold.
\begin{conj} \label{conjlin}
Any effective action of $\mathbb{T}_{n-1}$ on $F_n$ is linearizable.
\end{conj}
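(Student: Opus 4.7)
The plan is to follow the skeleton of the proof of Theorem \ref{BBfree}, substituting the commutative $(n-1)$-torus theorem of Białynicki-Birula \cite{BB2} for Theorem \ref{BBthm1} at the initial fixed-point step, and adapting each subsequent step to account for the dimension mismatch between the torus and the free algebra. First, I would pass to the commutative quotient $\mathbb{K}[x_1,\ldots,x_n]$: the induced action $\bar\sigma$ of $\mathbb{T}_{n-1}$ is linearizable by \cite{BB2} and in particular has a fixed point, so as in Lemma \ref{fixedorigin} we may translate the free generators and assume that $\sigma(t)(z_i)$ has vanishing free part for all $t$.

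Next, decompose the linear part. Since $[a_{ij}(t)]$ is a rational representation of $\mathbb{T}_{n-1}$ on $\mathbb{K}^n$, after a linear change of free generators it becomes diagonal,
$$
\sigma(t)(z_i) = \chi_i(t) z_i + \sum_{|J|\geq 2} a_{i,J}(t) z^J,
$$
with characters $\chi_1,\ldots,\chi_n \in X^*(\mathbb{T}_{n-1}) \cong \mathbb{Z}^{n-1}$. The analogue of Lemma \ref{lem1} would then be that the $n\times(n-1)$ exponent matrix of these characters has full column rank $n-1$, equivalently that the $\chi_i$ span $X^*(\mathbb{T}_{n-1})\otimes\mathbb{Q}$. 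The proof is identical to that of Lemma \ref{lem1}: otherwise a positive-dimensional subtorus $\mathbb{T}'\subset\mathbb{T}_{n-1}$ acts trivially on the linear part, and the degree induction from the proof of that lemma forces $a_{i,J}(t)\equiv 0$ on $\mathbb{T}'$ for every $J$, contradicting effectiveness of $\sigma$.

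Setting $\tau(t):z_i\mapsto\chi_i(t)z_i$ and $\varphi(t)=\tau(t^{-1})\circ\sigma(t)$, I would define $G_i(z)$ to be the $\mathbb{T}_{n-1}$-invariant part of $\varphi(t)(z_i)$ (the coefficient at the trivial character in its expansion as a Laurent polynomial in $t$) and set $\beta:z_i\mapsto G_i(z)$. The conjugation identity $\beta=\tau(t^{-1})\circ\beta\circ\sigma(t)$ is formal and follows as in Lemma \ref{lem2}. The remaining task, exactly as before, is to verify that $\beta$ lies in $\Aut F_n$ rather than merely in $\End\hat F_n$.

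The hard part will be this last verification. In the $\mathbb{T}_n$-case, non-singularity of the \emph{square} exponent matrix ensured that distinct multi-indices $J\neq J'$ produced distinct weights $\chi_J := \sum_k j_k\chi_k$ of the torus, and that separation was precisely what bounded the support of $B_i(z)=\hat\beta^{-1}(z_i)$ by the polynomial degree $N$ of $\hat\sigma$. Here, the kernel of the map $\mathbb{Z}^n\to X^*(\mathbb{T}_{n-1})$, $(j_1,\ldots,j_n)\mapsto\sum_k j_k\chi_k$, has rank one, so \emph{resonances} $\chi_J=\chi_{J'}$ with $J\neq J'$ are unavoidable and the $\tau$-weight spaces of $\hat F_n$ are infinite-dimensional in general. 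To overcome this, one must decompose both $F_n$ and $\hat F_n$ into their $\tau$-isotypic components and run the argument of Lemma \ref{lem2} weight by weight, combining the resulting linear constraints on the $b_{i,J}$ with the bound $\Deg\hat\sigma(t)\leq N$ to show that $b_{i,J}=0$ whenever $|J|>N$. This refinement parallels the more delicate fixed-point-scheme analysis used by Białynicki-Birula in \cite{BB2}, now complicated further by the noncommutative word structure of the monomials $z^J$. Granting this refined analysis, one concludes as in Theorem \ref{BBfree} that $\tau=\beta^{-1}\sigma\beta$, producing the desired linearization.
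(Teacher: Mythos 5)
This statement is stated in the paper only as Conjecture \ref{conjlin}; the authors give no proof and explicitly present it as an expectation, in analogy with the commutative result of \cite{BB2}. Your proposal reproduces the skeleton of the proof of Theorem \ref{BBfree} and correctly locates the obstruction, but it does not overcome it: the clause ``granting this refined analysis'' is precisely the missing mathematics. Concretely, in Lemma \ref{lem2} the conclusion $b_{i,J}=0$ for $|J|>N$ is extracted from the fact that the coefficient of $\hat\sigma(t)(z_i)$ at the word $z^J$ equals $b_{i,J}\chi_J(t)$ plus a sum $S$ of terms whose $\tau$-weights differ from $\chi_J$; since a Laurent polynomial in $t$ vanishes identically only if each of its coefficients does, the degree bound on $\sigma$ kills $b_{i,J}$ individually. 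Once the torus has dimension $n-1$, the weight map $\mathbb{Z}^n\to X^*(\mathbb{T}_{n-1})$ has a rank-one kernel, as you observe, so each resonance class contains infinitely many multi-indices and the same computation only yields relations of the form $b_{i,J}+\sum_L c_L=0$ within a class. Nothing in your argument shows that these relations force the individual $b_{i,J}$ with $|J|>N$ to vanish, nor even that $\beta$, defined as the $\tau$-invariant part of $\varphi$, is injective on generators or has invertible linear part.

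Two further cautions. First, your appeal to \cite{BB2} as a template is not a plan: Bia\l{}ynicki-Birula's proof for $\mathbb{T}_{n-1}$ on $\mathbb{A}^n$ is not a refinement of the averaging argument of \cite{BB1} but a genuinely different analysis of the fixed-point set and the orbit structure, and it is unclear what the noncommutative counterparts of those geometric inputs would be (the ``fixed-point scheme'' of an action on $F_n$ is not defined in this paper). Second, the noncommutative word structure makes the resonance problem strictly worse than in the commutative case, since distinct words $z^J$ with the same commutative image already share a weight. Your outline is a reasonable strategy document for attacking Conjecture \ref{conjlin}, but as it stands the decisive step --- bounding the support of $\hat\beta^{-1}$ in the presence of resonances --- is asserted, not proved.
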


On the other hand, there is little reason to expect this statement to hold with further lowering of the torus dimension. In fact, even in the commutative case the conjecture that any effective toric action is linearizable, in spite of considerable effort (see \cite{KR}), proved negative (counterexamples in positive characteristic due to Asanuma, \cite{Asanuma}).

Another direction would be to replace $\mathbb{T}$ by an arbitrary reductive algebraic group, however the commutative case also does not hold even in characteristic zero (cf. \cite{Sch}).

\end{document}